\newtheorem{thm}{Theorem}[section]
\newtheorem{lem}[thm]{Lemma}
\newtheorem{coro}[thm]{Corollary}
\newtheorem{mainthm}{Theorem}
\theoremstyle{definition}
\newtheorem{defi}[thm]{Definition}
\newtheorem{example}[thm]{Example}
\theoremstyle{remark}
\newtheorem{rmk}[thm]{Remark}
\numberwithin{equation}{section}
\def\N{{\mathbb N}}    
\def\Z{{\mathbb Z}}    
\def\R{{\mathbb R}}    
\def\S{{\mathbb S}}    
\def\T{{\mathbb T}}    
          \def\cC{{\mathcal C}}                      
\title{An obstruction to fiberwise Anosov flows over 3-dimensional Anosov flows}
\author[N.~Paulet]{Neige Paulet}
 \address{Queen's University, Kingston, Ontario}
\email{neige.paulet@queensu.ca}
\author[D.~Zhang]{Danyu Zhang}
 \address{University of Luxembourg, Luxembourg}
\email{danyu.zhang@uni.lu}
\date{\today}
\begin{document}
\maketitle

\begin{abstract}
We study obstructions preventing a three-dimensional Anosov flow from serving as the base of a fiberwise Anosov flow.
We prove a non-existence result if the base flow admits infinitely many periodic orbits in the same free homotopy class.
We get as a corollary that any $\R$-covered Anosov flow serving as the base of a fiberwise Anosov flow is orbit equivalent to a suspension or a geodesic flow.
\end{abstract}

\setcounter{tocdepth}{1}

\tableofcontents 

\section{Introduction}

Anosov flows have been extensively studied since their introduction in the 1960s, serving as prototype for chaotic dynamic and structural stability.
They are also interesting as they appear naturally in geometry and topology.
Classical examples are suspensions of Anosov diffeomorphisms of tori and the geodesic flows of negatively curved manifolds. 

The question of existence and classification of Anosov flows on a given manifold is central.
We study Anosov flows up to \emph{orbit equivalence}, that is, up to homeomorphisms mapping orbits to orbits.
In his thesis, Tomter \cite{tomterAnosovFlowsInfrahomogeneous1970} classified all homogeneous manifolds admitting Anosov flows: 
they are either nilmanifold bundles over the circle, unit tangent bundles of negatively curved manifolds, 
or mixed spaces obtained by combining these two types. 
We call these the \emph{algebraic} examples, and they provide examples of Anosov flows in higher dimensions, and besides them very few constructions are known. 
The situation is very different in dimension 3. 
Any Anosov flow carried by a manifold which is either a Seifert manifold or a torus bundle over a circle is orbit equivalent to an algebraic Anosov flow (\cite{planteAnosovFlows1972}, \cite{ghysFlotsAnosov3varietes1984}). 
On the other hand, a wide variety of \emph{non-algebraic} Anosov flows have been produced. 
Handel and Thurston \cite{handelAnosovFlowsNew1980} showed that one can glue pieces of geodesic flows to build new non-algebraic flows. 
The surgery of Dehn--Goodman--Fried \cite{goodmanDehnSurgeryAnosov1983}, \cite{friedTransitiveAnosovFlows1983} provided a very powerful tool using Dehn surgery along periodic orbits to construct new Anosov flows, leading to the first examples on hyperbolic 3-manifold.
Bonatti and Langevin \cite{bonattiExempleFlotAnosov1994} constructed the first transitive example transverse to an embedded torus and not orbit equivalent to a suspension. 
Franks and Williams \cite{franksAnomalousAnosovFlows1980} built the first non-transitive Anosov flow, by a blow-up and gluing procedure. 
Later, Béguin--Bonatti--Yu \cite{beguinBuildingAnosovFlows2017} introduced new methods to build Anosov flows by gluing together pieces of flows along a transverse torus boundary, generalized by the first author in \cite{pauletAnosovFlowsDimension2023} for the quasi-transverse torus boundary.

Altogether, dimension three provides a rich zoo of examples, from algebraic models to flows produced by surgeries, blow-ups, and gluings. 
In contrast, in dimension four and above, there is essentially only one class of non-algebraic examples that is known \cite{barthelmeAnomalousAnosovFlows2021}, which are non-transitive.

A promising framework in this direction is that of \emph{fiberwise Anosov flows},
introduced by Farrell-Gogolev \cite{farrell-gogolev}, and further discussed by Barthelmé–Bonatti–Gogolev–Rodriguez-Hertz in \cite{barthelmeAnomalousAnosovFlows2021}. 
Given a $\T^d$-torus bundle $E \to M$, a flow on $E$ is said to be fiberwise Anosov if it projects to a flow on the base $M$ 
and admits a uniform hyperbolic splitting along the $\T^d$ fibers (Definition \ref{def: fiberwise}). 
This construction extends the classical suspension of toral automorphisms, 
and encompass higher-dimensional versions such as the double suspension or Tomter’s example of mixed type. 
A key fact is that whenever the base flow is Anosov, 
a fiberwise Anosov flow automatically yields a new Anosov flow on the total space. 
Thus, fiberwise Anosov flows provide a way to lift three-dimensional dynamics to higher dimensions, 
and a natural candidate for producing new non-algebraic examples. 
In \cite{barthelmeAnomalousAnosovFlows2021}, they provide a dichotomy result for fiberwise Anosov flow: they are either suspension, or have equal dimension of stable and unstable dimension $\geq 3$. This comes from a dichotomy result for Anosov flows admitting pair of periodic orbits freely homotopic.

The central question motivating this work is therefore: 
\emph{which three-dimensional Anosov flows can serve as the base of a fiberwise Anosov flow?} 
Equivalently, can the realm of non-algebraic constructions available in dimension three be transported fiberwise to higher dimensions? 
Or are there intrinsic obstructions that prevent such covering from existing? 

\medskip

In this paper, we obtain the following obstruction. 
We show that if a 3-dimensional Anosov flow admits infinitely many periodic orbits in the same free homotopy class, 
then it cannot be the base of a fiberwise Anosov flow. 

\begin{mainthm} \label{thm: non existence of fiberwise if infinitely homotopic orbit}
    Let $\phi^t$ an Anosov flow on a 3-manifold $M$, such that there is infinitely many periodic orbits in the same free homotopy class.
    Then there is no fiberwise Anosov flow covering $\phi^t$.
\end{mainthm}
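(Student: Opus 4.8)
The plan is to argue by contradiction, extracting from a hypothetical fiberwise Anosov flow a rigid linear datum attached to each periodic orbit, and then colliding that datum with the free-homotopy structure supplied by Fenley's theory of lozenges.

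\emph{Setup and the local obstruction along one orbit.} Suppose $\psi^t$ is a fiberwise Anosov flow on a $\T^d$-bundle $\pi\colon E\to M$ covering $\phi^t$, with fiberwise splitting $T^{\mathrm{fib}}E=E^s\oplus E^u$ of ranks $d^s,d^u$ (constant since $E$ is connected), and let $\rho\colon\pi_1(M)\to\GL(d,\Z)$ be the monodromy of the bundle. The first step is to record the key local fact: over a periodic orbit $\gamma$ of $\phi^t$, represented by $g_\gamma\in\pi_1(M)$, the restriction of $\psi^t$ to $E|_\gamma$ is the flow on the torus bundle over $\gamma$ whose fiber return map is (isotopic to) $\rho(g_\gamma)$, and fiberwise hyperbolicity forces $\rho(g_\gamma)$ to be an Anosov matrix of fixed index $(d^s,d^u)$, its stable (resp.\ unstable) eigenspace being the fiber of $E^s$ (resp.\ $E^u$) over the periodic points lying on $E|_\gamma$. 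Thus the only, but entirely rigid, constraint carried by $\psi^t$ is that every periodic orbit of $\phi^t$ has hyperbolic monodromy of one and the same index.

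\emph{Inputting the free homotopy structure.} All periodic orbits $\gamma_n$ of the given class represent a single conjugacy class $[g]$, so every $\rho(g_{\gamma_n})$ is conjugate to $A:=\rho(g)$. By the structure theory of free homotopy classes of Anosov flows, the infinitely many freely homotopic orbits assemble into an infinite chain of lozenges in the orbit space $\cO$ of $\widetilde{\phi^t}$, whose corners are the lifts of the $\gamma_n$, and two consecutive corners are the opposite corners of one lozenge. This forces the orientation reversal $g_{\gamma_n}$ conjugate to $g_{\gamma_{n+1}}^{-1}$, whence $A$ is conjugate to $A^{-1}$ in $\GL(d,\Z)$. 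Since a conjugacy $PAP^{-1}=A^{-1}$ sends the stable eigenspace of $A$ isomorphically onto its unstable eigenspace, this already recovers the dichotomy equality $d^s=d^u$ of \cite{barthelmeAnomalousAnosovFlows2021}. The whole point of the present theorem is that finitely many orbits stop exactly here, whereas the infinite chain must be pushed to an outright contradiction.

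\emph{Upgrading to a contradiction via continuity of the splitting.} Because $\psi^t$ is itself an Anosov flow on the closed manifold $E$, its bundles, and in particular their vertical parts $E^s,E^u$, are continuous on $E$. The plan is to exploit accumulation: since $M$ is compact and the $\gamma_n$ are infinitely many, one lifts to periodic orbits $\Gamma_n$ of $\psi^t$ with $E^s$ along $\Gamma_n$ equal to the stable eigenspace $S_n$ of $\rho(g_{\gamma_n})$ and $E^u$ equal to the unstable eigenspace $U_n$; after passing to a convergent subsequence $e_n\to e_\infty$ in $E$, continuity gives $S_n\to E^s(e_\infty)$ and $U_n\to E^u(e_\infty)$ with $E^s(e_\infty)\oplus E^u(e_\infty)$ still a direct sum. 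I would then feed in the lozenge relation of the previous step, which identifies the stable datum of $\gamma_n$ with the unstable datum of the neighbouring orbit of reversed orientation, to show that along the chain the two families interchange, so that in the limit $E^s(e_\infty)=E^u(e_\infty)$, contradicting the directness of the splitting and $d^s=d^u\ge 1$.

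\emph{Main obstacle.} The delicate point is this last step: the corners of the chain escape to infinity in $\cO$ while the periods of the $\gamma_n$ tend to infinity, so neither the accumulation of the $\Gamma_n$ in $E$ nor the persistence of the stable/unstable interchange through the limit is formal. I expect the real work to be (i) controlling the convergence of the vertical subspaces $S_n,U_n$ even though the conjugating elements $\rho(h_n)$ leave every compact subset of the discrete group $\GL(d,\Z)$ — most plausibly by working with the genuine continuous (indeed Hölder) splitting of $\psi^t$ on $E$ rather than with the linear monodromy alone — and (ii) transporting the interchange $S_n\leftrightarrow U_{n\pm1}$ along the stable and unstable leaves bounding the lozenges so that it survives passage to the limit. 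This holonomy-along-the-chain analysis, rather than the algebra of $A\sim A^{-1}$, is where the genuine content of the theorem lies.
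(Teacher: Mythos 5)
Your first half matches the setup of the paper's actual proof: over each periodic orbit the fiber return map is an Anosov diffeomorphism conjugate to the monodromy $\rho(g_\gamma)$, and free homotopy forces all these return maps to be conjugate to one fixed $A$. But your assertion that hyperbolicity of a fixed index is ``the only, but entirely rigid, constraint carried by $\psi^t$'' is exactly where the proposal loses the theorem: you discard the quantitative part of Definition \ref{def: fiberwise}, namely that the return map over an orbit of period $\tau$ expands every vector of $V^u$ by at least $C\lambda^{\tau}$. That quantitative constraint is the whole proof. It yields (via Lyapunov exponents and Pesin's formula; this is Lemma \ref{lem: entropy linear} of the paper) $h_{\mathrm{top}}(F_n)\ge K\,t_n$, where $t_n$ is the period of $\gamma_n$. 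Since an infinite family of periodic orbits of an Anosov flow has periods tending to infinity (expansivity, \cite{bowenExpansiveOneparameterFlows1972}), while topological entropy is a conjugacy invariant --- so $h_{\mathrm{top}}(F_n)=h_{\mathrm{top}}(A)$ for every $n$ --- the contradiction is immediate. No lozenge structure, no orbit-space analysis, and no limits of splittings are needed.

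Your proposed contradiction mechanism, by contrast, has a genuine gap which you yourself flag, and I do not believe it can be closed in the form you describe. Continuity (indeed uniform transversality) of the splitting $V^s\oplus V^u$ on the compact manifold $E$ means that for any convergent sequence $e_n\to e_\infty$ the limits of the stable and unstable subspaces are exactly $V^s(e_\infty)$ and $V^u(e_\infty)$, which are transverse; so the equality $E^s(e_\infty)=E^u(e_\infty)$ you aim for can never be produced by such a limiting argument --- the very continuity you invoke forbids it. The stable/unstable ``interchange'' you want to transport lives in the linear algebra of monodromies of \emph{different} orbits, conjugated by elements $\rho(h_n)$ that are unbounded in $\GL(d,\Z)$, while the corners of the chain escape to infinity in the orbit space; there is no mechanism making that interchange survive passage to a limit in $E$, and you correctly identify this as the unresolved core of your argument. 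What does survive unboundedness of the conjugators is a numerical conjugacy invariant, which is precisely why the paper's argument runs through entropy (equivalently, through the growth of the expansion rates $C\lambda^{t_n}$) rather than through the geometry of the splitting. As written, your argument establishes only the index equality $d^s=d^u$ (the dichotomy already in \cite{barthelmeAnomalousAnosovFlows2021}), not the theorem.
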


The idea of the proof is that along each periodic orbit of the base, the dynamics restricts to an Anosov diffeomorphism of the torus fiber. 
The entropy of these maps grows linearly with the period of the base orbit, 
yet freely homotopic periodic orbits yield conjugate monodromies with the same entropy. 
This contradiction rules out the fiberwise construction. 
As a consequence, all $\R$-covered flows that are not suspensions or geodesic flows are excluded, 
by the classification of Barthelmé–Fenley \cite{barthelmeCountingPeriodicOrbits2017}. 

\begin{coro} \label{coro: fiberwise rigidity R covered}
    If $\Phi^t$ is a fiberwise Anosov flow covering an 3-dimensional $\R$-covered Anosov flow $\varphi^t$, then $\varphi^t$ is orbit equivalent to a suspension or the geodesic flow of a hyperbolic surface.
\end{coro}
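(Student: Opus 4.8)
The plan is to combine Theorem \ref{thm: non existence of fiberwise if infinitely homotopic orbit} with the structure theory of $\R$-covered Anosov flows in dimension three; the corollary is essentially a packaging of the main theorem against a known classification. Recall that, by the work of Barbot and Fenley, every $\R$-covered Anosov flow is of exactly one of two types: \emph{product type}, in which case it is already orbit equivalent to a suspension Anosov flow, or \emph{skewed type}. So the entire content will be to show that a skewed flow which is not a geodesic flow cannot serve as the base of a fiberwise Anosov flow.

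First I would invoke the counting result of Barthelmé–Fenley \cite{barthelmeCountingPeriodicOrbits2017}: for a skewed $\R$-covered Anosov flow, either the flow is orbit equivalent to the geodesic flow of a hyperbolic surface, or there exists a free homotopy class containing infinitely many distinct periodic orbits. Concretely, in the skewed case the orbit space of the lifted flow carries a canonical chain-of-lozenges structure, and freely homotopic periodic orbits correspond to lifts joined by such chains; outside the geodesic case these chains are unbounded, producing infinitely many periodic orbits in a single free homotopy class.

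Then I would argue by contradiction. Suppose $\Phi^t$ is a fiberwise Anosov flow covering $\varphi^t$. If $\varphi^t$ were skewed and not orbit equivalent to a geodesic flow, the previous step would furnish a free homotopy class with infinitely many periodic orbits, and Theorem \ref{thm: non existence of fiberwise if infinitely homotopic orbit} would then forbid the existence of any fiberwise Anosov flow over $\varphi^t$, contradicting the standing hypothesis. Hence $\varphi^t$ is either of product type, and thus orbit equivalent to a suspension, or skewed and orbit equivalent to the geodesic flow of a hyperbolic surface, which is exactly the claim.

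The only genuine subtlety beyond bookkeeping lies in correctly aligning the two dichotomies: the product/skewed classification must be matched with the suspension/geodesic conclusion, and one must ensure that the ``finitely many freely homotopic orbits'' alternative in \cite{barthelmeCountingPeriodicOrbits2017} coincides precisely with the geodesic flow case rather than with some intermediate possibility. Granting these two classification inputs, there is no further dynamical analysis to carry out, and the corollary follows immediately from the main theorem.
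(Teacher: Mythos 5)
Your proposal is correct and follows essentially the same route as the paper: the paper simply cites Theorem B of Barthelm\'e--Fenley \cite{barthelmeCountingPeriodicOrbits2017} (every free homotopy class finite $\Rightarrow$ suspension or, up to finite cover, geodesic flow) and combines it with Theorem \ref{thm: non existence of fiberwise if infinitely homotopic orbit}, exactly as you do. Your extra unpacking into the product/skewed dichotomy and the chain-of-lozenges mechanism is just the internal structure of that same cited theorem, so the alignment worry you raise is already resolved by quoting Theorem B in its contrapositive-ready form.
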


This reveals that the class of 3-dimensional Anosov flows that can be lifted fiberwise is limited. 
It includes in particular every generalized Handel–Thurston flow \cite{handelAnosovFlowsNew1980}, 
all flows obtained by gluing pieces of geodesic flows \cite{barbotFlotsAnosovVarietes1996}, 
and a large class of flows arising from Dehn–Goodman–Fried surgeries \cite{fenleyAnosovFlows3manifolds1994}.
Further non $\R$-covered examples with infinite free homotopy classes are found in \cite[Section~5]{barthelmeCountingPeriodicOrbits2017}. 
They are obtained either by blowing up periodic orbits or by the Béguin–Bonatti–Yu gluing construction \cite{beguinBuildingAnosovFlows2017}.
Some are non transitive, some are transitive but non $\R$-covered.
More generally, there exist efficient criteria using features on the orbit space and on the topology of the ambient manifold, that allow to determine whether a given flow admits periodic orbits with infinite free homotopy class.

The question of the existence of non algebraic fiberwise Anosov flows is still open.
At the opposite end of the spectrum, the example of Bonatti and Langevin \cite{bonattiExempleFlotAnosov1994} exhibits a much more rigid free-homotopy behavior: each free homotopy class contains at most one periodic orbit (\cite[Proposition 4.8]{barthelmeCountingPeriodicOrbits2017}), making it a promising candidate for producing non-algebraic fiberwise Anosov flows. 
This is the subject of ongoing work by the authors.

\subsection{Structure of the paper}

In Section~\ref{sec: preli}, we recall preliminary definitions and results on (fiberwise) Anosov flows and entropy.
In Section \ref{sec: infinitely many orbit}, we prove Theorem \ref{thm: non existence of fiberwise if infinitely homotopic orbit}.

\subsection{Acknowledgments}
We are grateful to Christian Bonatti for a discussion that led to the main idea of this paper. 
We thank Thomas Barthelm\'e for many discussions around repeated attempts to construct examples.
We also thank Andrey Gogolev for the suggestion of the project around fiberwise Anosov flows, and many conversations.

\section{Preliminaries}
\label{sec: preli}

\subsection{Fiberwise Anosov flows}

\begin{defi}[Anosov flow]
    A $\cC^1$ flow $\phi^t \colon M \to M$ on a closed manifold $M$ is called an \emph{Anosov flow} if there exist an invariant splitting
    \[
        TM \;=\; E^s \oplus \R X \oplus E^u,
    \]
    where $X$ is the vector field generating $\phi^t$, and constants $C > 0$ and $\lambda > 1$ such that for all $t > 0$
    \begin{align*}
        \|D\phi^{-t} (v^s)\|  &\ge C \lambda^t \|v^s\| \quad \text{for all } v^s \in E^s,\\
        \|D\phi^{t} (v^u)\|   &\ge C \lambda^t \|v^u\| \quad \text{for all } v^u \in E^u,
    \end{align*}
    where $\|\cdot\|$ denotes any Riemannian norm on $M$.
\end{defi}

A \emph{$\T^d$–bundle} $E$ over a base $B$ is a topological space which is locally homeomorphic to a product $U_i \times \T^d$, where $\{U_i\}_{i \in \N}$ is an open cover of $B$.
The total space $E$ is obtained by gluing the charts $U_i \times \T^d$ and $U_j \times \T^d$ along maps
$$
\psi_{ij} \colon U_i \cap U_j \longrightarrow G,
$$
where $G$ is a subgroup of $\mathrm{Homeo}(\T^d)$; the maps $\psi_{ij}(x)$ play the role of transition maps on the fibers above $x \in U_i \cap U_j$.
The group $G$ is called the \emph{structure group} of the bundle $E$.
We refer the reader to \cite{husemoller,steenrod} for background on fiber bundles.

An \emph{affine $\T^d$–bundle} is a torus bundle whose structure group consists only of affine automorphisms of $\T^d$.
Equivalently, it is a locally trivial fiber bundle with structure group
$$
SL(d,\Z) \ltimes \T^d
$$
acting on $\T^d$ by affine transformations $(A,v) x = Ax + v$.

The following definition is due to \cite{barthelmeAnomalousAnosovFlows2021}.
Let $M$ be a closed smooth manifold and let
$$
\T^d \longrightarrow E \overset{\pi}{\longrightarrow} M
$$
be an affine torus bundle.
For $x \in M$, we denote by $\T^d_x$ the fiber over $x$, and by
\[
V E \;=\; \ker D\pi \subset T E
\]
the \emph{vertical subbundle}, consisting of tangent vectors to the fibers of $\pi$.

\begin{defi}[Fiberwise Anosov]\label{def: fiberwise}
    Let $\T^d \to E \overset{\pi}{\to} M$ be an affine torus bundle and let $\phi^t \colon M \to M$ be a flow.
    A flow $\Phi^t \colon E \to E$ is called a \emph{fiberwise Anosov flow over $\phi^t$} if:
    \begin{enumerate}
        \item $\Phi^t$ fibers over $\phi^t$, i.e.\ $\pi \circ \Phi^t = \phi^t \circ \pi$ for all $t$.
        \item There exist a $D\Phi^t$–invariant vertical splitting $V E \;=\; V^s \oplus V^u$,
        a constant $C > 0$ and a constant $\lambda >1$ such that for all $t > 0$,
        \begin{align*}
            \|D\Phi^{-t} v^s\|   &\ge C \lambda^t \|v^s\| \quad \text{for all } v^s \in V^s,\\
            \|D\Phi^{-t} v^u\| &\ge C \lambda^t \|v^u\| \quad \text{for all } v^u \in V^u.
        \end{align*}
    \end{enumerate}
\end{defi}

In this paper we are interested in the case where the base flow $\phi^t$ is a 3–dimensional Anosov flow.
By \cite[Proposition~2.4]{barthelmeAnomalousAnosovFlows2021}, a fiberwise Anosov flow over an Anosov base is itself an Anosov flow on the total space.
Conversely, by \cite[Proposition~2.5]{barthelmeAnomalousAnosovFlows2021}, any Anosov flow $\Phi^t$ on $E$ that fibers over an Anosov flow $\phi^t$ on $M$ is automatically fiberwise Anosov.

We say that a fiberwise Anosov flow $\Phi^t \colon E \to E$ over $\phi^t \colon M \to M$ is \emph{affine} if for each $x \in M$ and $t \in \R$, the map
$$
\Phi^t_x \colon \T^d_x \longrightarrow \T^d_{\phi^t(x)}
$$
corresponding to the restriction of the flow $\Phi^t$ to $\T^d_x$ is an affine diffeomorphism.
This property is independent of the choice of local trivializations at $x$ and $\phi^t(x)$, since the transition maps take values in the affine structure group.

We give two classical examples of fiberwise affine Anosov flows below.

\begin{example} \label{ex: product suspension and mixed}
    \begin{enumerate}
        \item\label{double-susp} \emph{(Double suspension).}
        Let $A \colon \T^m \to \T^m$ and $B \colon \T^n \to \T^n$ be Anosov toral automorphisms.
        Define a flow
        $\tilde\Phi^t \colon \T^m \times \T^n \times \R \longrightarrow \T^m \times \T^n \times \R$
        by $$\tilde\Phi^t(x,y,s) = (x,y,s+t).$$
        Let $\Z$ act on $\T^m \times \T^n \times \R$ by
        \[
        (x,y,s)\cdot 1 = (A^{-1}x, B^{-1}y, s+1).
        \]
        The flow $\tilde\Phi^t$ is $\Z$–equivariant and descends to a flow $\Phi^t$ on the quotient
        $$E := (\T^m \times \T^n \times \R)/\Z.$$
        The induced flow $\Phi^t$ is a fiberwise affine Anosov flow over the suspension of the diffeomorphism $B$ on $\T^n$.

        \item \emph{(Tomter's mixed example).}
        Let $\Sigma$ be a closed hyperbolic surface of genus at least $2$, and let $\phi^t$ denote the geodesic flow on $T^1\Sigma$.
        In \cite{tomterAnosovFlowsInfrahomogeneous1970}, Tomter constructs a fiberwise affine Anosov flow $\Phi^t$ covering $\phi^t$ as follows:
        one finds a representation
        \[
        \rho \colon \pi_1(\Sigma) \longrightarrow SL(4d,\Z)
        \]
        and considers the quotient
        \[
        E := \bigl( T^1 \mathbb{H}^2 \times \T^{4d} \bigr) \big/ \pi_1(\Sigma),
        \]
        where $\pi_1(\Sigma)$ acts diagonally.
        The flow $\Phi^t$ on $E$ is the quotient of the product of the lifted geodesic flow on $T^1\mathbb{H}^2$ with the identity on $\T^{4d}$.
        More details can be found in \cite{barbotAlgebraicAnosovActions2013}.
    \end{enumerate}
\end{example}

For a general $\T^d$-bundle, we denote by
$\rho \colon \pi_1(M) \to \mathrm{Homeo}(\T^d)$
the monodromy representation of the bundle.
If the torus bundle is affine, the image of $\rho$ is contained in the affine group $SL(d,\Z) \rtimes \T^d$.
This representation depends on the choice of base point in $M$, and changing the base point conjugates $\rho$ by an element of $SL(d,\Z)$.

\begin{rmk} \label{rmk: hyperbolic monodromy}
    Let $\Phi^t \colon E \to E$ be a fiberwise Anosov flow covering $\phi^t \colon M \to M$, and let $\rho \colon \pi_1(M) \to \mathrm{Homeo}(\T^d)$ be the monodromy representation of the $\T^d$–bundle.
    For any periodic orbit $\gamma$ of $\phi^t$ in $M$, if $c \in \pi_1(M)$ represents a loop freely homotopic to $\gamma$, then $\rho(c)$ is an Anosov diffeomorphism of $\T^d$.
    Indeed, the restriction of $\Phi^t$ to the subbundle $\pi^{-1}(\gamma)$ over $\gamma$ is a suspension flow, and $\pi^{-1}(\gamma)$ is the mapping torus with monodromy $\rho(c)$.
    The first return map
    \[
    F \colon \T^d_x \longrightarrow \T^d_x,
    \]
    of the fiberwise Anosov flow on the fiber above $x \in \gamma$ is an Anosov diffeomorphism of the torus, conjugated to $\rho(c)$.
    
\end{rmk}

\subsection{Entropy}

We now recall a few facts about entropy for Anosov diffeomorphisms, which will be used in the article.

Let $f \colon \T^d \to \T^d$ be a $\cC^1$ Anosov diffeomorphism.  
It is necessarily transitive, and admits a unique measure of maximal entropy $\mu$, which is ergodic (see, for example, \cite{bowen}).  
Pesin’s entropy formula \cite[Theorem~5.4.5]{pesin} then gives
\begin{equation} \label{eq: entropy formula}
    h_{\mathrm{top}}(f)
    = h_\mu(f)
    = \int_{\T^d} 
      \sum_{\chi_i(x) > 0} k_i(x)\, \chi_i(x)\, d\mu(x),
\end{equation}
where $\chi_i(x)$ are the Lyapunov exponents at $x$, counted with multiplicities $k_i(x)$.  
See \cite[Section~2.1]{pesin} for further details.

If $A \in SL(d,\Z)$ is a linear Anosov automorphism of the torus, its topological entropy is
\[
h_{\mathrm{top}}(A)
= \sum_{|\lambda_i|>1} \log |\lambda_i|,
\]
where $\lambda_i$ are the eigenvalues of $A$.  
For a general Anosov diffeomorphism $f \colon \T^d \to \T^d$, Franks proved that $f$ is conjugate to a linear automorphism $A$ of $\T^d$ \cite{franksAnosovDiffeomorphisms1971}, hence
\[
h_{\mathrm{top}}(f) = h_{\mathrm{top}}(A).
\]

The following lemma shows that for a fiberwise Anosov flow, the entropy of the first return map on the fiber above a periodic orbit of the base grows at least linearly with the period of the orbit.

\begin{lem} \label{lem: entropy linear}
There exists a constant $K > 0$ such that for every periodic orbit $\gamma$ of $\phi^t$ with period $\tau$ and every $x \in \gamma$,  
if $F \colon \T_x \to \T_x$ denotes the first return map of the fiberwise flow along the fiber $\T_x$, then
\[
h_{\mathrm{top}}(F) \;\ge\; K\,\tau.
\]
\end{lem}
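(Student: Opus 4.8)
The plan is to identify the first return map with the time-$\tau$ map of $\Phi^t$ restricted to the fiber, bound its positive Lyapunov exponents from below using the uniform fiberwise hyperbolicity, and then feed this into the entropy formula \eqref{eq: entropy formula}. First I would observe that, since $x$ lies on a periodic orbit of period $\tau$, we have $\phi^\tau(x)=x$, so $\Phi^\tau$ preserves the fiber $\T_x$ and the first return map is exactly $F=\Phi^\tau|_{\T_x}$. By Remark \ref{rmk: hyperbolic monodromy}, $F$ is an Anosov diffeomorphism of $\T^d=\T_x$; its tangent space at each point is the vertical fiber $V_xE=V^s_x\oplus V^u_x$, and in particular the unstable bundle $V^u$ is non-trivial, of some dimension $d_u\ge 1$.

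Next I would compute the positive Lyapunov exponents of $F$. Since $DF^n=D\Phi^{n\tau}$ on $VE$, for any $y\in\T_x$ and any nonzero $v\in V^u_y$ the defining inequality of Definition \ref{def: fiberwise} at time $t=n\tau$ gives $\|DF^n v\|=\|D\Phi^{n\tau}v\|\ge C\lambda^{n\tau}\|v\|$. Taking $\tfrac1n\log$ of both sides and letting $n\to\infty$, the constant $C$ washes out, so every unstable Lyapunov exponent satisfies $\chi_i(y)\ge\tau\log\lambda$ at $\mu$-almost every point $y$. The crucial point is that $\lambda>1$ is the \emph{single} fiberwise Anosov constant from Definition \ref{def: fiberwise}, holding uniformly over all of $E$ and hence independent of the orbit $\gamma$.

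Finally I would apply the entropy formula \eqref{eq: entropy formula}. Since $F$ has at least $d_u\ge 1$ positive Lyapunov exponents, each bounded below by $\tau\log\lambda$, integrating against the measure of maximal entropy $\mu$ yields
\[
h_{\mathrm{top}}(F)=\int_{\T_x}\sum_{\chi_i(y)>0}k_i(y)\,\chi_i(y)\,d\mu(y)\;\ge\;\tau\log\lambda,
\]
so that $K=\log\lambda>0$ works. The only step requiring genuine care is the uniformity: one must confirm that $\lambda$ does not depend on $\gamma$ (guaranteed because the hyperbolicity estimate in Definition \ref{def: fiberwise} is stated with constants $C,\lambda$ uniform over $E$), and that the lower bound on the exponents holds $\mu$-almost everywhere rather than merely along a single orbit. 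I expect this uniformity to be the main, though mild, obstacle; everything else is a direct application of the results recalled in the \emph{Entropy} subsection.
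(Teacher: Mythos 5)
Your proof is correct and follows essentially the same route as the paper's: both bound the unstable Lyapunov exponents of $F$ below by $\tau\log\lambda$ using the uniform constants $C,\lambda$ from Definition \ref{def: fiberwise}, then apply Pesin's formula \eqref{eq: entropy formula} together with $h_{\mathrm{top}}(F)=h_\mu(F)$ for the measure of maximal entropy. The only cosmetic difference is that the paper records the slightly stronger constant $K = d\,\log\lambda$ with $d=\dim V^u$, while you settle for $K=\log\lambda$ via $d_u\ge 1$; both are valid.
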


\begin{proof}
Let $\chi_i(y)$ denote the Lyapunov exponents of $F$ at a point $y \in \T_x$, with multiplicities $k_i(y)$.  
By the fiberwise Anosov property, for every $v \in V^u$ we have
\[
\|DF(v)\| \;\ge\; C\, \lambda^{\tau} \|v\|,
\]
where $\lambda > 1$ and $C > 0$ are uniform constants.  
Iterating $F$, we obtain
\[
\|DF^k(v)\| \;\ge\; C\, \lambda^{k \tau} \|v\|.
\]
Hence the Lyapunov exponents satisfy
\[
\chi_i(y)
    \;\ge\;
    \limsup_{k \to \infty} 
    \frac{1}{k} \log \bigl( C\, \lambda^{k \tau} \|v\| \bigr)
    = (\log \lambda)\, \tau
\]
Applying Pesin’s formula \eqref{eq: entropy formula}, and possibly renormalizing constants, we obtain
\[
h_\mu(F)
    = \int_{\T_x}
      \sum_{\chi_i(y) > 0}
      k_i(y)\, \chi_i(y)\, d\mu(y)
    \;\ge\; d\, (\log\lambda) \, \tau
\]
where $d$ is the dimension of $V^u$. Since $h_{\mathrm{top}}(F) = h_\mu(F)$ for the measure of maximal entropy, this proves the claim with $K= d (\log\lambda)$.
\end{proof}

\section{Infinitely many freely homotopic periodic orbits}

\label{sec: infinitely many orbit}

In this section, we prove Theorem \ref{thm: non existence of fiberwise if infinitely homotopic orbit}.

\begin{proof}[Proof of Theorem \ref{thm: non existence of fiberwise if infinitely homotopic orbit}]
Consider $M$ a 3-manifold and $\phi^t$ an Anosov flow on $M$.
Suppose that $\phi^t$ admits infinitely many freely homotopic periodic orbits.
The period of the orbits of this infinite family goes to infinity (see for example \cite[Theorem 5]{bowenExpansiveOneparameterFlows1972}).
We take a sequence of periodic points $\{x_n\}$ of $\phi^t$, each of which has period $t_n\geq n$ where $n\in\N_{>0}$, i.e. $\phi^{t_n}(x_n)=x_n$. Let us denote the periodic orbits of $x_n$ by $\gamma_n$.

Suppose there is a fiberwise Anosov flow $\Phi^t$ that fibers over $\phi^t$.
Denote $VE= V^s \oplus V^u$ the vertical hyperbolic splitting from Definition \ref{def: fiberwise}.
Denote the restriction of $\Phi^{t_n}$ to the fiber $\T^d_{x_n}$ by $F_n:\T^d_{x_n}\to \T^d_{x_n}$. Each $F_n$ is an Anosov diffeomorphism of the torus $\T^d \simeq \T^d_{x_n}$.

From the definition of fiberwise Anosov, we have in each fiber $\T^d_{x_n}$ and for any $v\in V^u$ that
$$\|d F_n v\|\geq C\lambda^{t_n}\|v\|\geq C\lambda^{n}\|v\|.$$
We will consider the entropy of the map $F_n$.

We know that every $\gamma_n$ is freely homotopic to $\gamma_0$, for a fixed periodic orbit $\gamma_0$. Let us denote the free homotopy between $\gamma_n$ and $\gamma_0$ by $H_n: [0,1]\times\S^1\to M$. 
Denote $H_n^* E$ the pullback of the $\T^d$-bundle over $[0,1]\times\S^1$.
The monodromy of this subbundle is given by a linear automorphism $A \colon \T^d \to \T^d$. Since both $\gamma_0$ and $\gamma_n$ are periodic orbits of the Anosov flow, $F_n$ is conjugate to the monodromy along $\gamma_n$ and hence is conjugate to $A$. 
They all have the same topological entropy.
It follows from Lemma
\ref{lem: entropy linear} that
$$h_{\mathrm{top}}(A) = h_{\mathrm{top}}(F_n)  \geq K \, t_n \geq K n $$
but the right hand can be arbitrarily large.
\end{proof}

\begin{rmk}
   We remark here that although our original definition of fiberwise Anosov flows requires the bundle to be an affine bundle, we did not need the bundle to be affine for the above proof of Theorem \ref{thm: non existence of fiberwise if infinitely homotopic orbit}. 
Indeed, each $F_n$ is conjugated to the monodromy map $A_n:\T^n \to \T^n$ of the torus bundle over $\gamma_n$ (not necessarily affine), and they are all equal because $\gamma_n$ are freely homotopic.
So they all have the same entropy.
But this is in contradiction with the fact that the entropy of $F_n$ grows as the period of the base orbit $\gamma_n$, hence goes to infinity. 
\end{rmk}

\begin{proof}[Proof of Corollary \ref{coro: fiberwise rigidity R covered}]
We know from {\cite[Theorem B]{barthelmeCountingPeriodicOrbits2017}} that if $\phi^t$ is an $\R$-covered Anosov flow on a closed $3$-manifold $M$ such that every periodic orbit is freely homotopic to at most a finite number of other periodic orbits, then either $\phi^t$ is orbit equivalent to a suspension or, up to finite cover, $\phi^t$ is orbit equivalent to the geodesic flow of a negatively curved surface.
This result together with Theorem \ref{thm: non existence of fiberwise if infinitely homotopic orbit} gives Corollary~\ref{coro: fiberwise rigidity R covered}.
\end{proof}

\begin{rmk} \label{rmk: example infinite many}
One can construct further examples of Anosov flows on 3-manifold satisfying the condition of Theorem \ref{thm: non existence of fiberwise if infinitely homotopic orbit} using the methods of \cite{pauletAnosovFlowsDimension2023} together with the criteria in \cite{barthelmeCountingPeriodicOrbits2017}.
In particular, any Anosov flow containing an \emph{atoroidal piece cut from a skew Anosov flow} (in the sense of \cite{pauletAnosovFlowsDimension2023}) will admit an infinite free homotopy class of periodic orbits; hence it cannot be the base of a fiberwise Anosov flow.
Indeed, the infinite free homotopy class of the original skew flow is contained in the atoroidal piece by \cite[Lemma~2.13]{barthelmeCountingPeriodicOrbits2017} and will persist under the gluing.

Using also the classification of free Seifert pieces\footnote{A free Seifert piece in an Anosov flow is a Seifert piece which is not periodic, that is no periodic orbit is freely homotopic to the fiber for any Seifert fibration} in \cite{barbotFreeSeifertPieces2021}, any Anosov flow whose JSJ decomposition contains a cycle of free Seifert pieces will admit infinitely many freely homotopic periodic orbits, and therefore cannot be the base of a fiberwise Anosov flow.
Such a cycle forms a \emph{skew piece} (morally a piece cut from a skew Anosov flow); as soon as there are more than two pieces, the flow is not a geodesic flow but will exhibit an infinite free homotopy class by the same argument.
This ``skew piece'' viewpoint will be formalized in a work in progress \cite{barthelmeSkewPiecesPseudoAnosov2025}.
\end{rmk}

\begin{rmk}[Perspectives]
The obstruction proved here is given by the presence of infinite free homotopy classes of periodic orbits. 
To construct examples it is natural to look at 3-dimensional Anosov flows lying at the opposite end of the spectrum, i.e. with ``small'' homotopy classes of periodic orbits.
A good candidate is the Bonatti--Langevin flow~\cite{bonattiExempleFlotAnosov1994}, which is not orbit equivalent to a suspension, and where each free homotopy class contains at most one periodic orbit (see~\cite[Proposition~4.8]{barthelmeCountingPeriodicOrbits2017}). 
This makes it a promising base for a non-algebraic fiberwise Anosov flow, and is the purpose of an ongoing work by the authors.
One could also consider more generally totally periodic Anosov flows (\cite{barbotClassificationRigidityTotally2015}) as base candidate, as they have only finitely many periodic orbits in each free homotopy class (\cite[Corollary~4.6]{barthelmeCountingPeriodicOrbits2017}).

Another potential base candidate is the Franks-Williams example~\cite{franksAnomalousAnosovFlows1980}.
In \cite{barthelmeAnomalousAnosovFlows2021}, the authors show that the higher-dimensional construction suggested in \cite{franksAnomalousAnosovFlows1980} cannot be carried out when one restricts to linear gluing maps along the fibers, because the transversality of stable and unstable manifold cannot be achieved.
It would be interesting to investigate whether a similar construction could work with non-linear gluing maps.

\end{rmk}

\bibliographystyle{alpha}
\bibliography{nonexbib.bib}

\end{document}